	\title{\LARGE \bf
		Safe Bayesian Optimization using Interior-Point Methods - Applied to Personalized Insulin Dose Guidance
	}
	\author{Dinesh Krishnamoorthy, 
		 Francis J. Doyle III 
		\thanks{The authors are with the Harvard John A. Paulson School of Engineering and Applied Sciences, Harvard University, Allston, MA, 02134, USA. 
			{\tt\small dkrishnamoorthy@seas.harvard.edu, frank\_doyle@seas.harvard.edu}}%
	}
\newtheorem{theorem}{Theorem}
\newtheorem{assume}{Assumption}
\newtheorem{define}{Definition}
\newtheorem{lemma}{Lemma}
\renewcommand{\algorithmicrequire}{\textbf{Input:}}
\renewcommand{\algorithmicensure}{\textbf{Output:}}
\newcommand{\algrulehor}[1][.2pt]{\par\vskip.5\baselineskip\hrule height #1\par\vskip.5\baselineskip}
\newcommand*{\algrule}[1][\algorithmicindent]{\makebox[#1][l]{\hspace*{.5em}\vrule height .75\baselineskip depth .25\baselineskip}}%
\begin{document}

		\maketitle
		\thispagestyle{empty}
		\pagestyle{empty}

\begin{abstract}
	 This paper considers the problem of Bayesian optimization for systems with safety-critical constraints, where both the objective function and the constraints are unknown, but can be observed by querying the system.  In safety-critical applications, querying the system at an infeasible point can have catastrophic consequences. Such systems require a safe learning framework, such that the performance objective can be optimized while satisfying the safety-critical constraints with high probability.  In this paper we propose a safe Bayesian optimization framework  that ensures that the  points queried are always in the interior of the  partially revealed safe region, thereby guaranteeing constraint satisfaction with high probability. The proposed interior-point Bayesian optimization framework can be used with any acquisition function, making it broadly applicable. The performance of the proposed method is demonstrated using  a personalized insulin dosing application for patients with type 1 diabetes.
\end{abstract}

\section{Introduction}
Consider the following constrained optimization problem 
\begin{align}\label{Eq:Opt}
 	\min_{x\in \mathcal{X}}   \;\;\left\{ f^{0}(x) \;	| \;  f^{i}(x) \ge 0, \;\forall i = 1,\dots,m \right\}
\end{align}
where $ f^{0}: \mathcal{X} \rightarrow \mathbb{R} $ is the objective function, $ f^{i}: \mathcal{X} \rightarrow \mathbb{R} $ are the constraints, and $ \mathcal{X} \subset \mathbb{R}^{n_{x}} $ is some compact domain. 
 
	In many applications, one has to make decisions in unknown environments, where the objective function $ f^{0}(x) $ and the constraints $ f^{i}(x) $ are unknown, but can be evaluated at any arbitrary query point  $ x $ in some domain of interest $ \mathcal{X} $. The noisy observations of the cost and constraints obtained by querying the system  can be used to learn and find the optimum of the unknown black-box system.

	Bayesian optimization is one such  powerful sequential decision-making  strategy that allows us to compute the optimum by sequentially querying the zeroth order oracle (i.e, observing only the cost and constraints from the real system) in as few steps as possible \cite{mockus1978application,jones1998efficient,shahriari2015taking}. 
	This is achieved by placing a probabilistic surrogate model, typically a Gaussian process (GP) model, that is updated via Bayesian posterior updating every time a new observation is available. The  probabilistic surrogate model conditioned on the observations, are used to choose the next query point by means of an acquisition function $  \alpha(x) : \mathcal{X} \rightarrow \mathbb{R}$. Acquisition functions are typically chosen to leverage the uncertainty in the posterior model to trade-off exploration versus exploitation \cite{shahriari2015taking}. 
	To this end, Bayesian optimization allows one to find the optimum of an unknown system by systematically exploring the action space  $ \mathcal{X} $. 
	

Decision-making in many engineering problems often require satisfaction of safety-critical constraints. Evaluating any arbitrary point in  $ \mathcal{X} $ may lead to constraint violation  with catastrophic outcomes. If the constraints are known \textit{a priori}, then this can be used to determine the feasible action space $ \mathcal{F} \subseteq \mathcal{X} $. However, the challenging case arises when the constraints are  unknown, since in this case $  \mathcal{F} $ is unknown. In constrained Bayesian optimization literature, this is typically handled by placing  probabilistic surrogate models for each constraint in addition to the cost function, which are updated based on the noisy observations of the constraints.   

Initial works in the direction of constrained Bayesian optimization such as  \cite{gelbart2014constrained,gardner2014bayesian} handled the constraints by scaling the acquisition function  \textcolor[rgb]{0,0,0}{with the probability of  feasibility }\textcolor[rgb]{0,0,0}{(PF)}
\begin{equation}\label{Eq:PF}
	x_{n} = \arg \min_{x \in \mathcal{X}}  \prod_{i=1}^m \Phi^{i}(x)\alpha^0(x)
\end{equation}
where $ \alpha^0(x) $ is the acquisition function of the unconstrained optimization problem and $ \Phi^{i}(x)  $ is the cumulative density function of the $ i^{th} $ constraint GP \cite{gelbart2014constrained}. However, in this approach, the next query point may violate the constraints since the probability of feasibility is only known accurately after observing a data point. This approach  is therefore not suitable when we have safety-critical constraints.

In the presence of safety-critical constraints, 
it is highly undesirable to explore an action where the constraints would be violated. Hence, one has to carefully choose the next query point, such that the safety-critical constraints are satisfied with high probability. The authors in \cite{sui2015safe} presented the SafeOPT algorithm, where the safety-critical constraints were posed as a minimum performance requirement constraint. This was later extended to the case with arbitrary safety constraints decoupled from the performance objective in \cite{berkenkamp2021bayesian}. These approaches are based on identifying a safe set based on the constraint observations. The safe set in these approaches are enlarged using  Lipschitz continuity properties.

This paper proposes an alternative approach for safe Bayesian optimization using interior-point methods that guarantee constraint satisfaction with high probability. The approach is similar to that of  \cite{berkenkamp2021bayesian}, however, instead of using the Lipschitz continuity properties to identify the safe set at each iteration, we enforce constraint by augmenting barrier terms to the acquisition function based on the partially revealed safe region (formally defined later in Eq. \eqref{Eq:FeasibleSet}). 
Recently, the authors in \cite{pourmohamad2021bayesian}  presented a Bayesian optimization approach using barrier functions, where the barrier functions are incorporated into the Gaussian process models itself. 
However, unlike our proposed approach,  this approach 
does not guarantee constraint  satisfaction with high probability, and also involves computing the expectation of the log operator which 
requires the use of a specific acquisition function, thus limiting its application.


Guaranteeing safety critical constraints while finding the optimum in an unknown environment arises in many applications. One such application area is personalized medicine where one would like to find the optimal drug dose that must be administered to trigger a desired response. However, this is challenging, since the effect of the drug varies significantly  from one individual to the other. Moreover, ensuring patient safety while finding the optimal drug dose without overdosing further adds to the challenge.  Often times there are several physical, genetic and environmental  factors that impact the efficacy of a drug.  Models developed based on a population level are often  not well suited to find the optimal drug dosing for a particular individual.
 We posit that Bayesian optimization is well suited for personalized dose guidance based on the observed patient response in as few iterations as possible.  As in any healthcare application, patient safety is of utmost importance  for dose guidance algorithms. Therefore, incorporating safe learning in the Bayesian optimization framework is crucial in  such applications.  In this paper we will   demonstrate the use of our proposed safe Bayesian optimization to find the  optimal insulin dose to counteract the effect of meal consumption in patients with type 1 diabetes without violating the safety-critical constraints.

The reminder of the paper is organized as follows: 
The proposed safe Bayesian optimization algorithm based on interior-point method is described in Section~\ref{sec:ProposedMethod}. Conditions under which the proposed method is shown to guarantee constraint satisfaction with high probability is analyzed in  Section~\ref{sec:theory}. The proposed method is demonstrated using a personalized insulin dose guidance application in Section~\ref{sec:BolusCalc} before concluding the paper in Section~\ref{sec:conclusion}.

\section{Proposed method}\label{sec:ProposedMethod}
\subsection{Algorithm }
Given the constrained  optimization problem \eqref{Eq:Opt}, the objective is to find the global optimum using only  noisy observations of the cost $ f^0(x_{n}) $ and  constraints $ f^{i}(x_{n}) $ obtained by sequentially evaluating actions $ x_{n} \in \mathcal{X} $ at iteration $ n $. 
We model the unknown cost and constraints with $ m+1 $ independent Gaussian processes:
\begin{subequations}
	\begin{align}\label{Eq:GP}
		f^{i}(x)\sim &\; \mathcal{GP}(\mu^{i}(x),k^i(x,x')), \; \forall i \in \mathbb{I}_{0:m}
	\end{align}
\end{subequations}
where  $ \mu^i(x) = \mathbb{E}(f^{i}(x)) $ and $ k^i(x,x') = \mathbb{E}[(f^{i}(x) - \mu^i(x))(f^{i}(x') - \mu^i(x'))  ]$  for $ i=0,\dots,m $ are the mean and covariance functions of the cost and constraints.

In this work, we assume that the constraints $ f^{i}(x) $ for all $ i = 1,\dots,m $ are safety-critical, i.e., we are not allowed to evaluate any actions that would violate the constraints.  We denote the feasible set by \[ \mathcal{F} := \{ x\in \mathcal{X} \,| \, f^{i}(x)\ge 0, \; \forall i \in \mathbb{I}_{1:m}\} \]  
Since the cost and the constraints are unknown, we  do not know the feasible set $ \mathcal{F} \subseteq \mathcal{X} $ \textit{a priori}. This implies that the first action $ x_{0} $ is not guaranteed to be feasible. We therefore make the following assumption. 
\begin{assume}
	The feasible set $ \mathcal{F} $ has a non-empty interior, and there exists a known starting point $ x_{0} \in \mathcal{F} $.
\end{assume}

Note that this is a  standard assumption in the safe learning literature \textcolor[rgb]{0,0,0}{\cite{berkenkamp2021bayesian}}. In many applications this assumption is justified by the fact that actions that are safe, but not necessarily optimal are known \textit{a priori}. For example, in personalized drug dosing applications, a safe but suboptimal initial action would correspond to  a drug dose of zero.

Although we stated at the beginning of this section that the goal is to find the global optimum, this may be challenging in the presence of safety-critical constraints since this restricts how much we can explore. Therefore, we slightly modify our goal, and restate that the objective is  to find the global optimum inside the safe set that is reachable from the initial safe point $ x_{0}  \in \mathcal{F}$.

Since the acquisition function  tells us what the next query point should be, the design and choice of the acquisition function is key to ensuring  safety of the next query point. 
At iteration $ n $, the acquisition function of the unconstrained optimization, denoted by $ \alpha^0(x) $ is induced from the posterior mean and the variance conditioned on the cost observations so far. 
In the unconstrained case, the next query point would be given by $ x_{n} = \arg \min_{x} \alpha^0(x) $\footnote{\textcolor[rgb]{0,0,0}{Keeping in line with the barrier methods from  optimization and control literature, we consider a minimization problem without loss of generality.}}. In the presence of safety-critical constraints, the acquisition function must also depend on the posterior mean and variance of the constraint GPs. By conditioning the constraint GPs on the observed constraint measurements, the feasible set is partially revealed. Based on the constraints observed until iteration $ n-1 $, we define the partially revealed feasible set as
\begin{equation}\label{Eq:FeasibleSet}
	\hat{\mathcal{F}}_{n-1} := \left\{ x \in \mathcal{X} | \mu_{n-1}^i(x) - \sqrt{\beta_n^i} \sigma_{n-1}^i(x)\ge 0,\forall i \in \mathbb{I}_{1:m}\right\}
\end{equation} 
where $ \mu_{i}(x)$ and $ \sigma_i(x) $ are the posterior mean and variance of the $ i^{th} $ constraint GP, and $ {\beta_n^i} $ is the confidence level scaling parameter. Simply put, the partially revealed safe set is defined based on the lower confidence bound \textcolor[rgb]{0,0,0}{(LCB) }of the constraint GPs. From Assumption~1, $ \hat{\mathcal{F}}_{0} \subseteq  \hat{\mathcal{F}}_{n-1}$ for all $ n>1 $.
To ensure that the next query point $x_{n} $ remains in the interior of the partially revealed safe set $ \hat{\mathcal{F}}_{n-1}  $, we propose the following acquisition function 
\begin{equation}\label{Eq:BarrierAcq}
	x_{n} = \arg \min_{x \in \mathcal{X}} \; \alpha^0(x) - {\tau} \sum_{i=1}^{m}{\mathcal{B}_{\beta_{n}}^i(x) }
\end{equation}
  where $ \mathcal{B}_{\beta_{n}}^i(x) := \ln\left[\mu_{n-1}^i(x) - \sqrt{\beta^i_{n}}\sigma_{n-1}^i(x)\right]  $  is the barrier term, and $ \tau >0$ is some small user-defined barrier parameter. \textcolor[rgb]{0,0,0}{Note that the choice of $ \tau $ is similar to any  interior point methods in the standard numerical optimization literature, where $\tau$ is reduced iteratively. One could also consider using  a constant $ \tau  $ or $ \tau^n $ with $ \tau\in (0,1) $ if suitable.}
  
The key idea of our safe Bayesian optimization is then as follows: If the confidence intervals of the constraint GPs are constructed to contain the true functions $ f^{i}(x) $ with high probability, then the log-barrier term in  \eqref{Eq:BarrierAcq} ensures that the next query point $ x_{n} $ will not violate the constraints.  The probability that the true function lies within the confidence intervals depends on the value of $ \beta_{n} $ used in \eqref{Eq:FeasibleSet}, which will be discussed in the next section. 

 It can be seen that the second term in \eqref{Eq:BarrierAcq} is appended to the unconstrained acquisition function  $ \alpha^0(x) $ which can be chosen freely. For example, using GP-LCB we have
\begin{equation}\label{Eq:GP-LCB}
	\alpha^0(x) = \mu^0_{n-1}(x) - \sqrt{\beta^0_n} \sigma^0_{n-1}(x)
\end{equation}
 Any other acquisition function such as  $ \varepsilon $-greedy, probability of improvement, expected improvement, Thompson sampling, entropy search etc. \textcolor[rgb]{0,0,0}{(see  e.g. \cite{shahriari2015taking} and the references therein)} can also be used for $ \alpha^0(x) $ in \eqref{Eq:BarrierAcq}. The proposed method is summarized in Algorithm~1.

\begin{algorithm}[t]
	\caption{Safe Bayesian Optimization using interior point method.}
	\label{alg:Training}
	\begin{algorithmic}[1]
		\Require Domain $ \mathcal{X} $, initial safe point $ x_{0} \in \mathcal{F} $, dataset $ \mathcal{D}_{0} := \{(x_{0},f^0(x_{0}),\{f^i(x_{0})\}_{i=1}^m)\} $, $ m+1 $ independent GP models, $ \tau>0 $
		\algrulehor
		\For {$ n = 1,2,\dots, $}
		\State Induce any unconstrained acquisition function  $ \alpha_{0}(x) $ using $ \mathcal{GP}(\mu_{n-1}^0(x),k^0(x,x')) $, e.g. \eqref{Eq:GP-LCB}.
		\State $ \mathcal{B}_{\beta_{n}}^i(x) \leftarrow \ln\left[\mu_{n-1}^i(x) - \sqrt{\beta^i_{n}}\sigma_{n-1}^i(x)\right]  $  $ \forall i \in \mathbb{I}_{1:m} $
		\State $ x_{n} \leftarrow \arg \min_{x\in \mathcal{X}}  \alpha^0(x) - {\tau} \sum_{i=1}^{m} \mathcal{B}_{\beta_{n}}^i(x) $
		\State Query $ x_{n} $ and observe cost and constraints
		\State  $ \mathcal{D}_{n} \leftarrow \mathcal{D}_{n-1} \cup \{(x_{n},f^0(x_{n}),\{f^i(x_{n})\}_{i=1}^m)\}  $%
				\State Update the GP models by conditioning on $ \mathcal{D}_{n}  $
		\EndFor
		\algrulehor
		\Ensure $ x_{n} $
	\end{algorithmic}
\end{algorithm}
\subsection{Theoretical results} \label{sec:theory}
In this section, we analyze the conditions under which we can guarantee constraint satisfaction with high probability using the proposed algorithm.

\begin{define}[Well-calibrated model]\label{def:wellCalibrated}
	A Gaussian process  model with posterior mean and variance $ \mu_{n-1}(x) $ and $ \sigma_{n-1}(x) $ that is used to approximate a function $ f(x) $ is said to be  a well calibrated model if  the inequality 
	\begin{equation}\label{Eq:WellCalibrated}
		\left|f(x) - \mu_{n-1}(x) \right| \le\sqrt{\beta_{n} }\sigma_{n-1}(x), \; \forall x \in \mathcal{X}, \; \forall n>0
	\end{equation}
	holds  with probability at least $ 1- \delta $ for some $ \delta \in (0,1) $.
\end{define}
That is, for a well calibrated model, the confidence interval of the GP contains the true function with high probability for all $ x $, for all iterations $ n $.
\begin{assume}
	The Gaussian processes  $ f^{i}(x) \sim \mathcal{GP}(\mu^i(x),k^i(x,x')) $ for all $ i \in \mathbb{I}_{1:m} $ are well-calibrated in the sense of Definition~\ref{def:wellCalibrated}.
\end{assume}
This assumption can be satisfied by carefully choosing $ \beta_{n} $ as quantified by \cite{chowdhury2017kernelized}, which is reformulated for our problem in the following lemma. 

\begin{figure*}[t]
	\centering
	\includegraphics[width=0.99\linewidth]{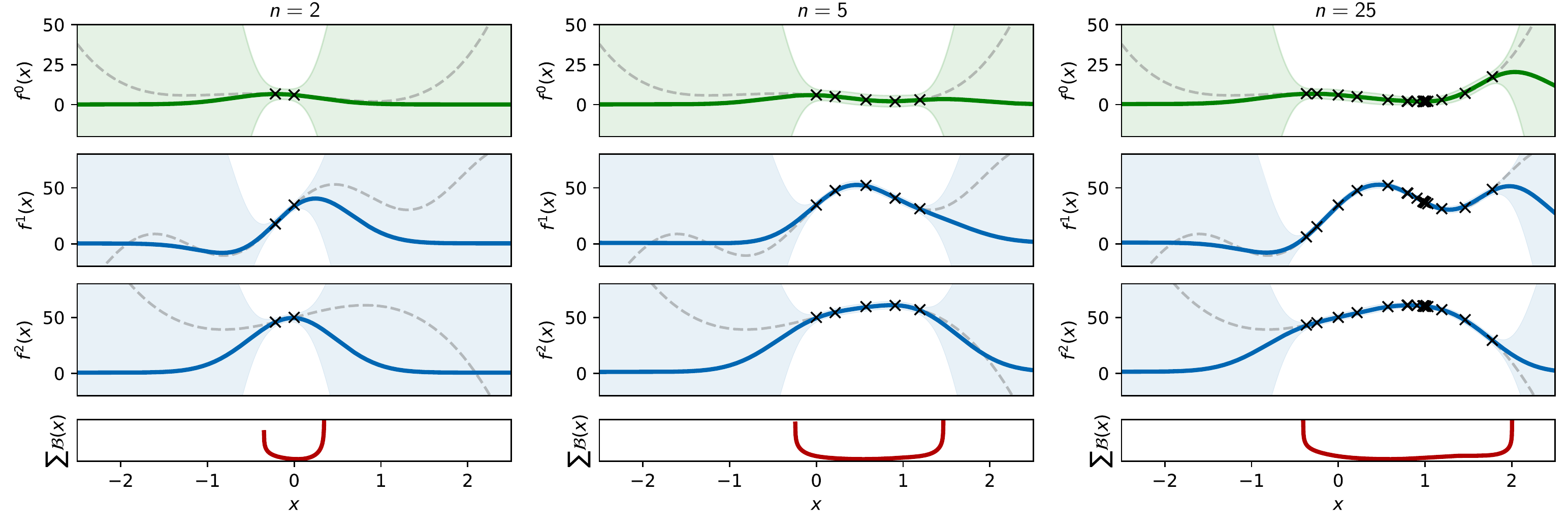}
	\caption{\textcolor[rgb]{0,0,0}{Safe Bayesian Optimization shown at iteration $ n=2,5 $, and 25. Top subplot shows the cost GP (green), the two middle subplots show the constraint GPs (blue), bottom subplot shows the barrier  term that is representative of the partially revealed safe region.} True functions are shown in gray dashed lines.  }\label{Fig:Example}
\end{figure*}
\begin{lemma}\label{lem:confidence}
	Assume that the constraints $ f^{i}(x) $ have a RKHS norm bounded by $ B^{i} $ for all $ i = 1,\dots,m $, and the corresponding  measurements are corrupted by $ v $-sub Gaussian noise.  If 
	$ \sqrt{\beta_n^i} = B^i + v\sqrt{2(\gamma^i_{n-1} + 1 + \ln(1/\delta))} $, then the following holds with probability at least $ 1- \delta $
	\begin{align}
	\left|f^{i}(x) - \mu^i_{n-1}(x) \right| \le  &\sqrt{\beta^i_{n}}\sigma^i_{n-1}(x)  \nonumber\\
	& \forall x \in \mathcal{X}, \, \forall n>0, \, \forall i \in \mathbb{I}_{1:m} 
	\end{align}
\end{lemma}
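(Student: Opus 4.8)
The plan is to recognize this as the standard agnostic (RKHS) confidence bound for Gaussian process regression, which is precisely Theorem~2 of \cite{chowdhury2017kernelized} building on the self-normalized martingale bounds of Abbasi-Yadkori et al.; I would therefore reproduce the structure of that argument specialized to each constraint $f^i$. Throughout, fix a single index $i$ and suppress it, write $\lambda = v^2$ for the GP noise variance acting as the regularizer, and recall the closed forms $\mu_{n-1}(x) = k_{n-1}(x)^\top (K_{n-1} + \lambda I)^{-1} y_{n-1}$ and $\sigma_{n-1}^2(x) = k(x,x) - k_{n-1}(x)^\top (K_{n-1}+\lambda I)^{-1} k_{n-1}(x)$, where $k_{n-1}(x)$ is the vector of kernel evaluations against the queried points, $K_{n-1}$ the Gram matrix, and $y_{n-1}$ the stacked noisy observations. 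The first step is to split the prediction error at an arbitrary $x$ into a deterministic bias term, obtained by replacing $y_{n-1}$ with the noise-free evaluations $f_{n-1} = (f(x_1),\dots,f(x_{n-1}))^\top$, and a stochastic noise term driven by $\epsilon_{n-1} = y_{n-1} - f_{n-1}$.

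The second step bounds the bias term $|f(x) - k_{n-1}(x)^\top(K_{n-1}+\lambda I)^{-1} f_{n-1}|$. Working in the feature space induced by the kernel and using the reproducing property together with Cauchy--Schwarz, this is controlled by $\|f\|_k \, \sigma_{n-1}(x) \le B\, \sigma_{n-1}(x)$, which produces the $B^i$ contribution to $\sqrt{\beta_n^i}$. This step is essentially algebraic once the RKHS norm bound $\|f^i\|_k \le B^i$ is invoked.

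The main obstacle is the noise term $k_{n-1}(x)^\top(K_{n-1}+\lambda I)^{-1}\epsilon_{n-1}$. Because the query points $x_t$ are chosen adaptively, $\epsilon_{n-1}$ forms a martingale-difference sequence with respect to the filtration generated by past queries and noises, so I would invoke the self-normalized concentration inequality as imported into the kernel setting in \cite{chowdhury2017kernelized}. Making this rigorous in the possibly infinite-dimensional RKHS is the delicate part: one lifts the problem to the feature map, applies the self-normalized bound to the resulting regularized design, and shows that the normalization reduces to $\sigma_{n-1}(x)$ times a factor $v\sqrt{2(\gamma_{n-1} + 1 + \ln(1/\delta))}$. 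The information-gain term $\gamma_{n-1}$ enters precisely as the bound $\tfrac{1}{2}\ln\det(I + \lambda^{-1} K_{n-1}) \le \gamma_{n-1}$ on the log-determinant of the Gram matrix appearing inside the self-normalized tail bound, and the $\ln(1/\delta)$ term comes from the prescribed failure probability.

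Finally I would add the bias and noise bounds to obtain $|f(x) - \mu_{n-1}(x)| \le \big(B + v\sqrt{2(\gamma_{n-1}+1+\ln(1/\delta))}\big)\,\sigma_{n-1}(x) = \sqrt{\beta_n}\,\sigma_{n-1}(x)$, holding uniformly in $x$ and $n$ with probability at least $1-\delta$ for the fixed constraint $i$. Restoring the index gives the per-constraint guarantee exactly as stated; I would remark that a joint statement over all $i \in \mathbb{I}_{1:m}$ follows by a union bound, which formally replaces $\delta$ by $\delta/m$ in each $\beta_n^i$.
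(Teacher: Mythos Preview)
Your proposal is correct and aligns with the paper's approach: the paper's entire proof is the single citation ``See \cite[Theorem~2]{chowdhury2017kernelized},'' and you invoke precisely that result while additionally sketching its bias--noise decomposition and self-normalized concentration argument. Your closing remark that a union bound over $i\in\mathbb{I}_{1:m}$ is formally needed (replacing $\delta$ by $\delta/m$) is a valid refinement that the paper glosses over.
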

\begin{proof}
See \cite[Theorem~2]{chowdhury2017kernelized}
\end{proof}
Here $ \gamma^i_{n-1} $ is the maximum information gained after $n-1 $ iterations as explained in \cite{srinivas2009gaussian,chowdhury2017kernelized}. For a Gaussian process this would be $ \gamma_{n-1} := \max \frac{1}{2} \ln |\mathbf{I} + v^{-1}\mathbf{K}|$, which is dependent on the kernel $ k^i(x,x') $. For the sake of brevity, the reader is referred to  \cite{srinivas2009gaussian} for further details on this. 
\begin{theorem}[Safe learning]
	With the assumptions of Lemma~1, if  $ \beta_{n}^i $ is chosen as in Lemma~\ref{lem:confidence} $ \forall i \in \mathbb{I}_{1:m} $, then starting from an initial point in the interior of the feasible set $ x_{0} \in \mathcal{F}$ such that $ \hat{\mathcal{F}}_{0} \neq \emptyset $, for any choice of acquisition function $ \alpha^0(x) $, the next query point $ x_{n} $ given by \eqref{Eq:BarrierAcq} satisfies  \begin{equation}\label{Eq:theorem}
		\text{Pr} [ f^{i}(x_{n})\ge0] \ge 1- \delta, \forall i \in \mathbb{I}_{1:m}, \; \forall n >0
	\end{equation} for any $ \delta \in (0,1) $. 
\end{theorem}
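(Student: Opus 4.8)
The plan is to decompose the statement into two independent parts: a deterministic \emph{interior-point} argument showing that the minimizer $x_n$ of \eqref{Eq:BarrierAcq} must lie strictly inside the partially revealed safe set $\hat{\mathcal{F}}_{n-1}$ of \eqref{Eq:FeasibleSet}, and a probabilistic \emph{calibration} argument showing that membership in $\hat{\mathcal{F}}_{n-1}$ implies true feasibility with probability at least $1-\delta$. The only randomness enters through the second part, via Lemma~\ref{lem:confidence}, so the confidence level $1-\delta$ carries over directly to the conclusion.

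First I would establish the interior-point claim. The barrier term $\mathcal{B}_{\beta_n}^i(x)=\ln[\mu_{n-1}^i(x)-\sqrt{\beta_n^i}\sigma_{n-1}^i(x)]$ is real-valued precisely when its argument is positive, i.e. exactly on the interior of $\hat{\mathcal{F}}_{n-1}$. As $x$ approaches the boundary $\partial\hat{\mathcal{F}}_{n-1}$, the argument of at least one logarithm tends to $0^+$, so $-\tau\mathcal{B}_{\beta_n}^i(x)\to+\infty$, and the whole objective in \eqref{Eq:BarrierAcq} diverges to $+\infty$ regardless of the (bounded) value of $\alpha^0$. Because Assumption~1 together with the nesting $\hat{\mathcal{F}}_0\subseteq\hat{\mathcal{F}}_{n-1}$ guarantees that $\hat{\mathcal{F}}_{n-1}$ has nonempty interior, the objective is finite on a nonempty open set; combining finiteness somewhere, continuity on the interior, compactness of $\mathcal{X}$, and the boundary blow-up, the infimum is attained at some $x_n$ lying strictly in the interior. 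Hence, for \emph{every} realization of the data, $\mu_{n-1}^i(x_n)-\sqrt{\beta_n^i}\sigma_{n-1}^i(x_n)>0$ for all $i\in\mathbb{I}_{1:m}$, and this holds independently of the choice of $\alpha^0(x)$.

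Next I would invoke the calibration. By Lemma~\ref{lem:confidence}, with the stated choice of $\beta_n^i$ the event that $|f^i(x)-\mu_{n-1}^i(x)|\le\sqrt{\beta_n^i}\sigma_{n-1}^i(x)$ holds simultaneously for all $x\in\mathcal{X}$, all $n>0$, and all $i\in\mathbb{I}_{1:m}$ has probability at least $1-\delta$. On this event the lower bound $f^i(x)\ge\mu_{n-1}^i(x)-\sqrt{\beta_n^i}\sigma_{n-1}^i(x)$ holds everywhere; evaluating it at the (deterministically interior) point $x_n$ from the previous step gives $f^i(x_n)\ge\mu_{n-1}^i(x_n)-\sqrt{\beta_n^i}\sigma_{n-1}^i(x_n)>0$. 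Since the interiority of $x_n$ is deterministic, the event $f^i(x_n)\ge0$ contains the calibration event, so its probability is at least $1-\delta$, yielding \eqref{Eq:theorem}.

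The main obstacle I anticipate is making the interior-point step fully rigorous rather than heuristic. Two points need care: one must rule out the degenerate case of an empty interior of $\hat{\mathcal{F}}_{n-1}$ (here the hypothesis $\hat{\mathcal{F}}_0\neq\emptyset$ should be read, via $x_0$ being strictly interior to $\mathcal{F}$ and the set nesting, as nonempty \emph{interior}), and one must argue that the unconstrained acquisition $\alpha^0$ cannot itself diverge to $-\infty$ fast enough to cancel the barrier — which is immediate for the standard, continuous, bounded acquisition functions on the compact domain $\mathcal{X}$ considered here. A secondary subtlety worth flagging is the uniformity over $n$: because the calibration bound of Lemma~\ref{lem:confidence} already holds simultaneously for all $n$, no union bound is accumulated over iterations, which is exactly why the single confidence level $1-\delta$ suffices for every $n>0$ in \eqref{Eq:theorem}.
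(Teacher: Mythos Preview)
Your proposal is correct and follows essentially the same two-step approach as the paper: invoke the log-barrier structure of \eqref{Eq:BarrierAcq} to conclude that $x_n$ satisfies $\mu_{n-1}^i(x_n)-\sqrt{\beta_n^i}\sigma_{n-1}^i(x_n)>0$, then combine with the lower-confidence inequality from Lemma~\ref{lem:confidence} to obtain $f^i(x_n)\ge 0$ with probability at least $1-\delta$. Your version is in fact more careful than the paper's, which simply asserts that the barrier ``ensures'' \eqref{Eq:prf2} without discussing existence of the minimizer, nonemptiness of the interior, or boundedness of $\alpha^0$.
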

\begin{proof}
Lemma~\ref{lem:confidence} implies that $\forall i \in \mathbb{I}_{1:m} $, $ \forall x \in \mathcal{X} $
\begin{equation}\label{Eq:prf1}
	f^i(x) \ge \mu_{n-1}^i(x)  - \sqrt{\beta_{n}^i} \sigma_{n-1}^i(x)  
\end{equation}
holds w.p. at least $ 1-\delta $ for any $ \delta\in (0,1) $. 

\noindent If $ \forall n>0  $, $ \exists\;  x_{n}\in  \mathcal{X} $  given by 
\eqref{Eq:BarrierAcq}, the log barrier term $ \mathcal{B}_{\beta_{n}}^i $ in \eqref{Eq:BarrierAcq} ensures 
\begin{equation}\label{Eq:prf2}
	\mu_{n-1}^i(x_{n})  - \sqrt{\beta_{n}^i} \sigma_{n-1}^i(x_{n})  > 0, \quad \forall i\in \mathbb{I}_{1:m}
\end{equation}
holds. Combining \eqref{Eq:prf1} and \eqref{Eq:prf2}  proves our result. 
\end{proof}
\subsection{Illustrative toy example}
\textcolor[rgb]{0,0,0}{We first  illustrate our approach on a toy example with two safety-critical constraints $ f^{i}(x)\ge0 $  for $ i=1,2 $ in Fig.~\ref{Fig:Example}. The cost GP is shown in green in the top subplots, and the constraint GPs are shown in blue in the middle subplots} \textcolor[rgb]{0,0,0}{(solid line denotes the mean, and the shaded area denotes the confidence interval)}.  The true (unknown) cost and constraint functions are shown in gray dashed lines. \textcolor[rgb]{0,0,0}{All the GPs use radial basis function (RBF) kernels with lengthscale = 0.5 and variance = 80, and zero prior mean.} We start from an initial safe point $ x_{0} = 0 $ such that $ \hat{\mathcal{F}}_{0}  \neq \emptyset $. The next query point is found my minimizing \eqref{Eq:BarrierAcq}. \textcolor[rgb]{0,0,0}{The log barrier term $ \sum\mathcal{B}_{\beta_{n}}^i(x) $ with $ \tau = 10^{-3} $ based on the LCB of the constraint GP is shown in the bottom subplot (in red), which is also representative of the partially revealed safe region $ \hat{\mathcal{F}}_{n-1} $. }
We see that the next query point is chosen within this safe region that minimizes the unconstrained acquisition function $ \alpha^0(x) $ (in this case  GP-LCB). As  we observe new data points, the posterior mean and variance of the constraint GPs reveal more of the feasible set, from which the next query point can be chosen.  Eventually, our algorithm converges to the optimum without violating the safety-critical constraint during the explorations.

\section{Personalized Insulin Dose Guidance using Safe Bayesian Optimization }\label{sec:BolusCalc}
\subsection{Motivation }
 Patients diagnosed with type 1 diabetes  require lifelong insulin replacement therapy, where insulin is injected subcutaneously both during mealtimes (known as bolus insulin) and fasting periods (known as basal or background insulin). 
Control algorithms, specifically model predictive control (MPC), have led to significant developments in artificial pancreas (AP) technology, which uses a continuous glucose monitor (CGM) and a continuous subcutaneous insulin infusion (CSII) pump to stabilize the  glucose levels within desirable targets \cite{doyle2014engineeringAlgorithms,boiroux2010nonlinear,hovorka2004nonlinear,bequette2013algorithms}. To this end, the  insulin needs during fasting periods are well studied  by control engineers. However, matching bolus insulin to counteract meal-related disturbance for a given patient remains an open challenge.

Following a meal consumption, the blood glucose concentration spikes up (known as \textit{postprandial } glucose).  This requires additional insulin (known as bolus). 
The bolus insulin dose is determined by a bolus calculator algorithm that is based on the carbohydrate content of the meal, and patient-specific parameters such as the insulin-to-carbohydrate ratio (ICR),  insulin sensitivity, correction factors \cite{schmidt2014bolus}, etc.
One of the main challenges with such bolus calculators is accurately determining the patient-specific parameters. Inaccurate parameters in the  bolus calculator can lead to overestimating or underestimating the bolus dosage. If the injected insulin is underestimated, this leads to \textit{hyperglycemia}, which is characterized by high blood glucose concentrations ($ > $180mg/dl), which can lead to several long term diabetes-induced complications. 
Therefore, it is important to administer sufficient insulin after each meal to avoid prolonged hyperglycemia. At the same time, overestimating the insulin can lead to \textit{hypoglycemia}, which is characterized by low blood glucose levels ($ < $70mg/dl) that can lead to severe short term or even fatal consequences. Therefore, hypoglycemia is a safety-critical constraint that must be avoided. 

Since the postprandial glucose dynamics depend on several factors, it is very challenging to develop a model that accurately predicts a patient's postprandial glucose dynamics. Therefore, there is a clear need for a model-free personalized bolus dose guidance algorithm that can learn and suggest the optimal bolus insulin dosage directly using only the CGM measurements from the patient. 

\subsection{Problem formulation }
In this section, we propose a personalized bolus calculator algorithm based on our proposed safe Bayesian optimization algorithm to find the optimum bolus dosage that is personalized to each individual while ensuring patient safety.   
\begin{figure*}
	\centering
	\includegraphics[width=0.97\linewidth]{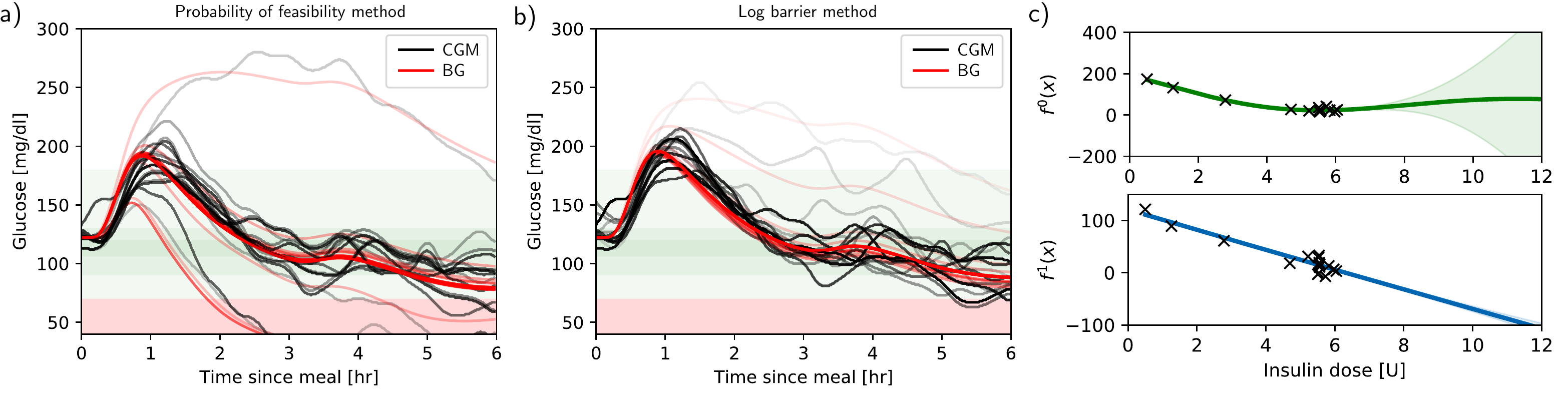}
	\caption{UVA/Padova virtual patient 1: (a) CGM (black) and plasma blood glucose (red) profiles using the probability of feasibility method \cite{gelbart2014constrained}. (b)  CGM (black) and plasma blood glucose (red) profiles using our proposed safe Bayesian optimization. (c) The cost  (green) and the constraint (blue) GPs after 15 meal observations using our proposed method.} \label{Fig:SimResults}
\end{figure*}
 To do this using safe Bayesian optimization, we must first formalize the cost and constraint functions for this application. \textcolor[rgb]{0,0,0}{In line with the existing literature on artificial pancreas \cite{van2008glycemic,cao2017extremum}, in this work we use the \textit{glycemic penalty index} (GPI)  as the metric to minimize. The GPI has been shown to be a very good scalar metric that captures the effect of glucose variations over time \cite{van2008glycemic}. The GPI is based on an} \textcolor[rgb]{0,0,0}{asymmetric penalty function $ J(y(t)) $ as  defined in \cite{cao2017extremum}}.
 If the meal is consumed at time $ t = 0 $, and a bolus insulin dose of $ x $ units is administered along with the meal, the postprandial GPI is then given as the cumulative penalty over $ T $ hours since meal consumption, i.e., in this case, the cost function is given by
\begin{equation}\label{Eq:GPI}
	f^0(x) := GPI(x) = \sum_{t=0}^{T}J(y(t))
\end{equation}
For a given meal, the postprandial glucose profile depends on the bolus insulin dose, and a smaller value of GPI indicates better glycemic control. 
If too much insulin is administered, then the blood glucose has an unavoidable undershoot \cite{goodwin2015fundamental}, leading to glucose levels dropping below the safe limit of 70mg/dl. Therefore,  the safety critical constraint is expressed in terms of the  lowest CGM value recorded after the meal peak, i.e., in this case
\begin{equation}\label{Eq:HypoConstraint}
	f^1(x) := \min \{y(t)\}_{t=t_{p}}^T - 70
\end{equation}
where $ t_{p} $ corresponds to the time of the  peak response. 
\begin{figure*}
	\centering
	\includegraphics[width=0.97\linewidth]{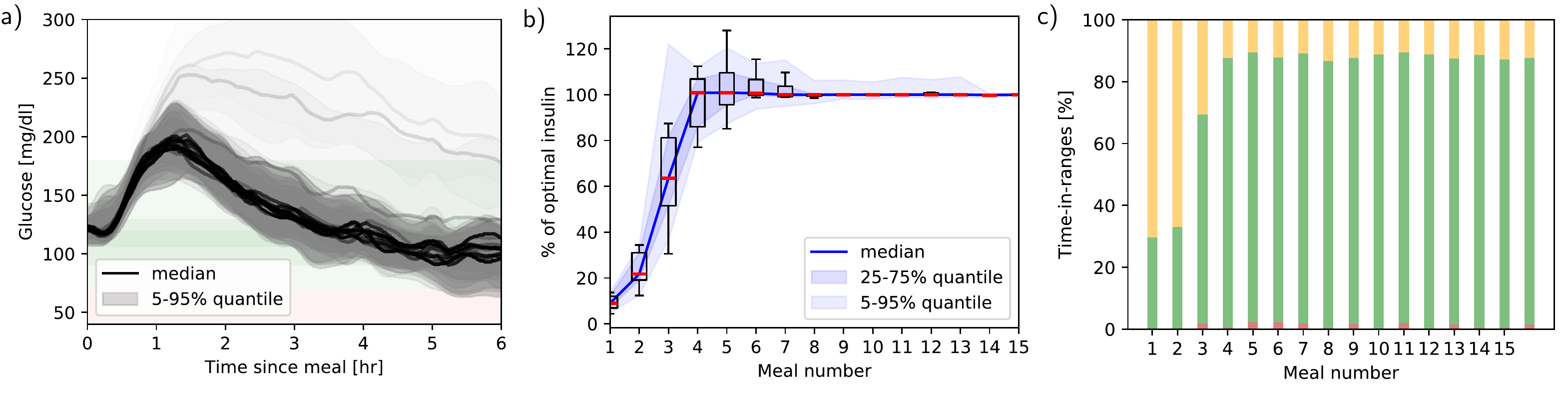}
	\caption{The overall results for the 10-adult cohort showing   (a) the postprandial CGM measurements (darker colors indicate newer meals) (b) normalized insulin value as a percentage of the optimum bolus. (c) The average \% time-in-range 70$<$CGM$\le$180mg/dl (green), time-above-range CGM$>$180mg/dl (yellow), and time-below-range BG$\le70$mg/dl (red) for the 10-adult cohort.}\label{Fig:AllPatients}
\end{figure*}

\subsection{In silico experimental setup}\label{sec:insilico}
In this work, we  use the US-FDA accepted UVA/Padova T1D metabolic simulator \cite{man2014uva} consisting of a cohort of 10-adults as our virtual patients. Note that no knowledge of the simulator model or any of its parameters are used by our algorithm. 
The \textit{in silico} experimental protocol is as follows: During each meal time,  the virtual patient is given a standardized meal portion containing  80g of carbohydrates. Along with each meal, the insulin dose suggested by the Bayesian optimization-based bolus calculator is administered to the patient. The Bayesian optimization iterations starts with the safe initial bolus insulin dose of 0.5U for all patients.  After $ T = 6 $h since the meal consumption, we observe the cost \eqref{Eq:GPI} and the constraint \eqref{Eq:HypoConstraint} based on the CGM data (which includes  sensor noise). For the period of $T= 6$h since the meal consumption, the basal insulin is kept constant at the pre-programmed value (given by the UVA/Padova simulator). We assume a budget of maximum 15 queries.  The bolus dose is chosen from a compact domain of $ x \in [0,20] $U of insulin.

For the constrained Bayesian optimization, we use two independent GP models, one  to model the effect of the insulin dose on the cost \eqref{Eq:GPI} and the other to model the  safety-critical hypoglycemia constraint \eqref{Eq:HypoConstraint}. \textcolor[rgb]{0,0,0}{We assume no prior knowledge, and therefore start with a GP with zero mean as the prior mean function for both the GPs. The cost GP uses radial basis function \textcolor[rgb]{0,0,0}{(RBF)} as the kernel, whereas the constraint GP uses both RBF and a linear kernel. }\textcolor[rgb]{0,0,0}{In this example, the hyperparameters are fixed, and are not re-optimized after each new observation. }Using the GPs, we use Algorithm~1 with $ \tau = 0.1 $ to find the optimal bolus dosage that is personalized to each patient. We also compare the performance of our algorithm with the constrained Bayesian optimization, where the acquisition function is scaled by the probability of constraint feasibility (cf. \eqref{Eq:PF}) \cite{gelbart2014constrained}. 

The safe Bayesian optimization algorithm is written in \texttt{Python}. The Gaussian process regression is performed using the \texttt{GPy} package from Sheffield ML group \cite{gpy2014}. The UVA/Padova simulator is implemented in  \texttt{MATLAB}. Note that the simulator parameters, noise levels, and their distributions are proprietary information and are accepted by the US-FDA as a substitute for  pre-clinical trials.

\subsection{Results}
Fig.~\ref{Fig:SimResults} shows the CGM profiles in black (observed measurement used by the algorithm) and the plasma blood glucose (BG) profiles in red (actual blood glucose without noise shown just for visualization) for the 15 meals  for patient 1 from the 10-adult cohort.  Older meals are shown in lighter shade. The left subplot shows the CGM profiles obtained when using the probability of feasibility method from \cite{gelbart2014constrained}. Here,  we can see that some of the insulin doses that are explored  by the PF algorithm are way too high and  violates the safety-critical hypoglycemia constraint by a large amount. Administering such large insulin doses on a real patient would be fatal. 
The middle subplot shows the results obtained when using our proposed safe Bayesian optimization approach. Here,  we see that the optimum bolus dose of around 6U of insulin is reached without violating the safety-critical constraint.  
The right subplot shows the cost and constraint GPs along with the observed data points for the 15 meals (shown in black cross) when using our proposed algorithm. 

The method is then applied to the remaining 9-adult virtual patients.
The median, 5\%-95\% percentile of the CGM data for  each meal for the 10-adult cohort  is shown in Fig.~\ref{Fig:AllPatients}a (darker colors indicate newer meals). The insulin dose normalized as a percentage of the optimal dose is  shown for the 10-adult cohort in Fig.~\ref{Fig:AllPatients}b. The percentage time-in-range averaged over the 10-adult cohort for each meal is  also shown in Fig.~\ref{Fig:AllPatients}c. Note that the time-below-range is only due to the CGM noise, and the actual blood glucose levels were above 70mg/dl for all the meals (cf. also Fig.~\ref{Fig:SimResults}b).  Detailed results for the individual patients from the 10-adult cohort (with and without CGM noise) can be made available upon request. Results from the 10-adult cohort shows that    our algorithm is able to find the optimal bolus within five meals for all the patients without causing severe hypoglycemia .
We also successfully tested  our algorithm on a larger cohort of  virtual patients  based on the Hovorka T1D simulator \cite[Ch. 2]{boiroux2012model}. However, the results are not shown here for the sake of brevity, but can be made available upon request. \textcolor[rgb]{0,0,0}{Similarly, we also compare the performance of our algorithm with that of  \cite{berkenkamp2021bayesian} and \cite{pourmohamad2021bayesian}, and can be made available upon request.  }

\section{Conclusion \& Future Work}\label{sec:conclusion}
This paper presented a safe Bayesian optimization algorithm based on interior point methods, where we showed that feasibility of the safety-critical constraints can be guaranteed with high probability by adding log-barrier terms based on the LCB of the constraint GPs to any unconstrained acquisition function. 
The proposed method was demonstrated on a personalized insulin dose guidance application, where we showed we are able to find the optimum bolus dosage within a few iterations without violating the safety-critical hypoglycemia constraint using the FDA-accepted 10-adult cohort virtual patient simulator. 
\textcolor[rgb]{0,0,0}{Future work would include extending our algorithm to safe contextual Bayesian optimization, where the optimal decisions also depend on  known disturbances,  and also study the robustness of our algorithm to unmeasured disturbances. For the personalized bolus insulin dosage application, in addition to the simulations results presented here, we have since then developed and tested this algorithm on more detailed clinically relevant simulation experiments from different metabolic simulators, also extending to contextual Bayesian optimization.  We have also tested its robustness to variations in insulin sensitivity, which will be presented in a future work. }

	\bibliographystyle{IEEEtran}  
	\bibliography{L4DC,diabetes}

\appendices

\section{Comparison to other methods from literature}

In this section, we compare the performance of our algorithm to the safe Bayesian optimization from \cite{berkenkamp2021bayesian}, and also the Bayesian optimization via barrier functions from \cite{pourmohamad2021bayesian}. 

\subsection{Illustrative example from Section~II-C}

We first consider the same empirical example from Section~II-C, with the same identical hyperparameters, starting at the same initial point using the method from \cite{berkenkamp2021bayesian}. The results are shown in Fig.~\ref{Fig:BO7}. As expected, the method from [7] also ensures constraint satisfaction. However, it was seen to be less robust to the choice of the hyperparameters with respect to convergence to the optimum. For the cost function (shown in green), there is a small band of variance in the GP model. Since [7] selects “the most uncertain element across all performance and safety functions”, the actions chosen by this algorithm chooses points from within this small band of variance. However, this method did converge to the optimum in a safe manner, when the hyperparameters are also optimized after each new observation.

Then we compare with [8], where the acquisition function is given by \[ \alpha(x) = \mu^0(x) -\sigma^0(x)^2 \sum_{i=1}^{n} \left(\ln\left[\mu^i(x) \right] - \frac{\sigma^i(x)^2}{2 \mu^i(x)^2}\right) \]
It can be seen that the method from [8] violates the constraints at iterations 4 and 6. This can be seen more clearly in Fig.~\ref{Fig:BO8} that shows the actions taken over the different iterations. Although [8] uses log barrier terms, such constraint violations are not characteristic of the barrier methods, which are supposed to strictly lie in the interior of the feasible set. Since the natural log is taken only on the mean, this can lead to potential constraint violations, unlike in our approach, where the natural log is taken on the LCB, thus enabling us to guarantee constraint satisfaction as shown in Theorem 1.

The actions taken by the different Bayesian optimization routine over the different steps are shown below in Fig.~\ref{Fig:BOcompare}, where the green shaded region represents the feasible set. Note that since the constraints are nonconvex, the feasible set is disjoint, and as stated in Section~II, the goal is to find the global optimum that is reachable within the safe set starting from the initial point.

\begin{figure}
	\centering
	\includegraphics[width=\linewidth]{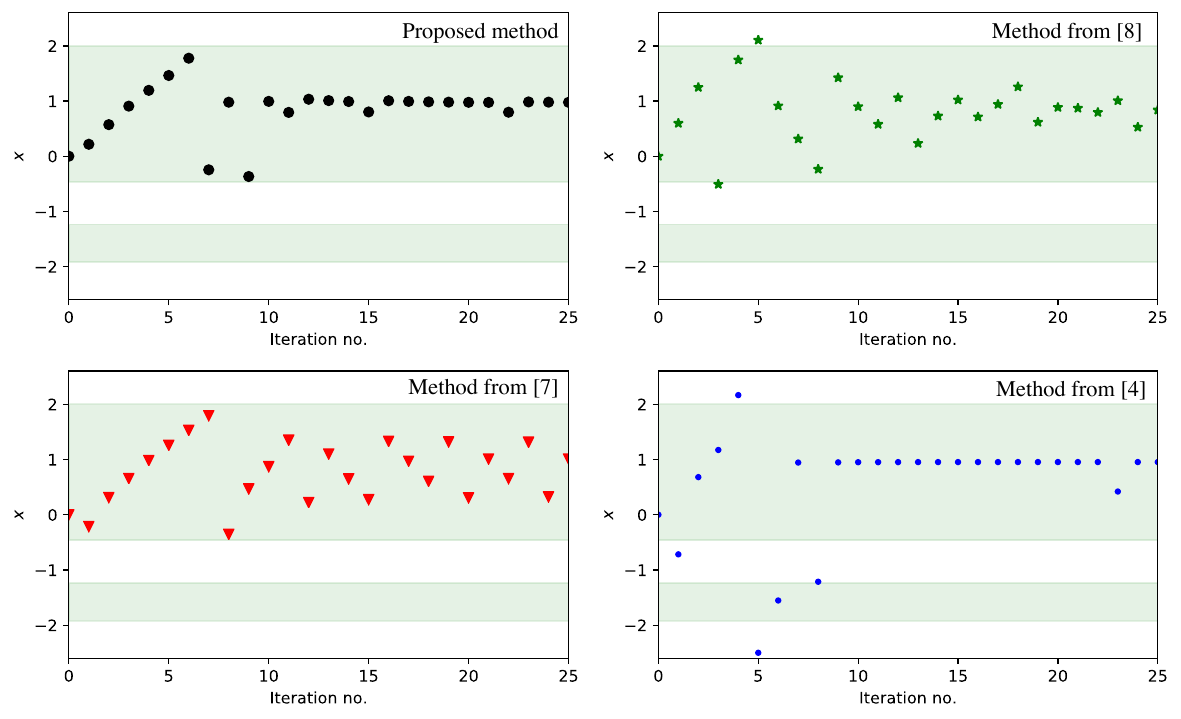}
	\caption{Comparison of the actions taken over the different iterations by the different constrained BO algorithms.}\label{Fig:BOcompare}
\end{figure}

\subsection{Personalized dose guidance example from Section~III}
We also implement the method from [7] and [8] on the insulin dose guidance application example. All the hyperparameters, initial conditions etc. are the same as used with our method. The simulation results using the algorithm from [7] is shown in Fig.~\ref{Fig:BC7}, and the method from [8] is shown in Fig.~\ref{Fig:BC8}. It can be immediately seen that, although [8] converges to the constrained optimum, it violates the safety critical constraint at iteration number 2, leading to severe hypoglycemic glucose concentrations, rendering it unsuitable for such safety critical problems.  
The method from [7] on the other hand learns the optimum dose without violating the safety critical constraint as expected. Here the hyperparameters chosen for the GP models did not lead to a case as observed in Fig.~\ref{Fig:BO7} with the empirical example, thus converging nicely to the optimum. The performance is very similar to our proposed method, which is not surprising, since both the algorithms ensure safe learning with high probability leveraging upon the results of \cite{chowdhury2017kernelized}. Since [7] chooses the next action as the most uncertain element, this indirectly has a similar effect as the LCB, thus leading to very similar performance. 

\begin{figure*}
	\begin{subfigure}{\linewidth}
		\centering
		\includegraphics[width=\linewidth]{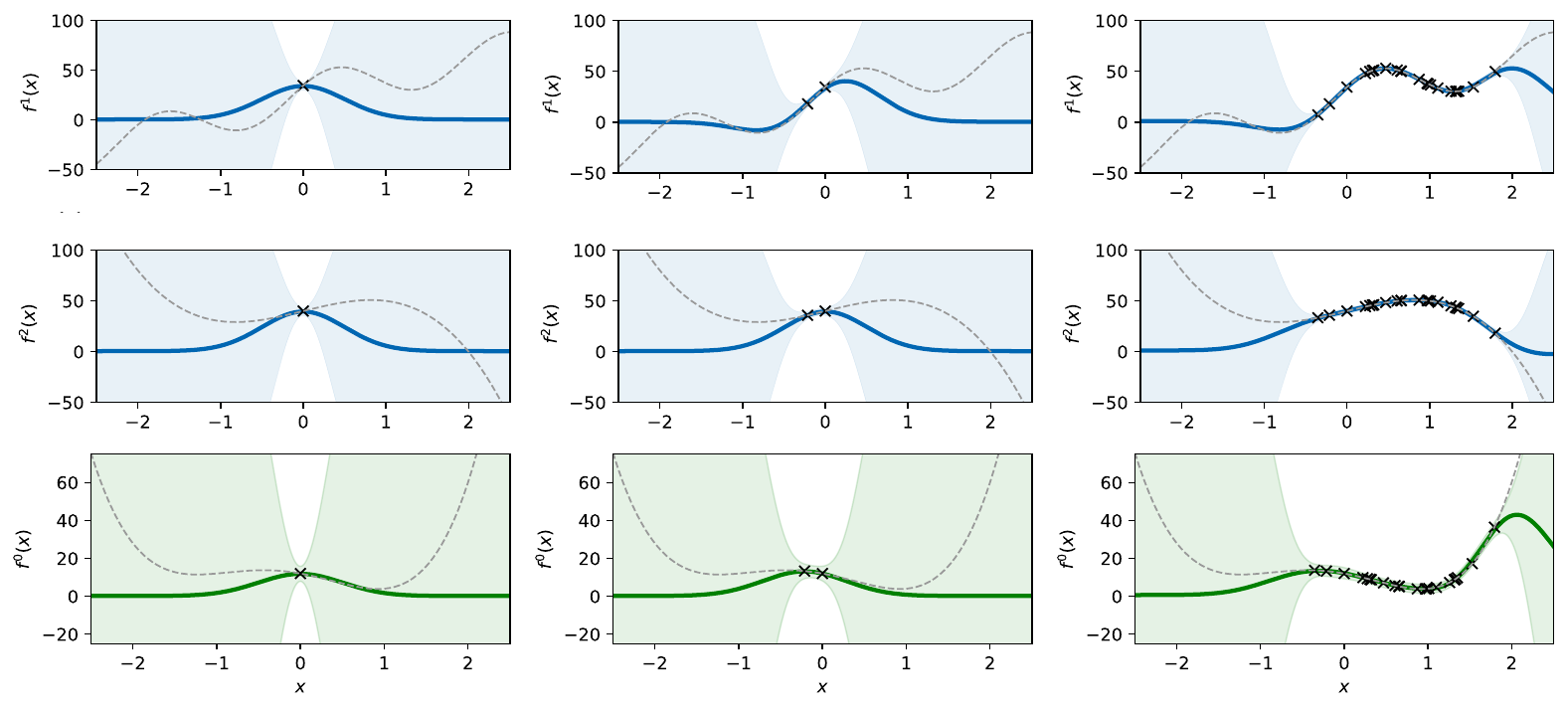}
		\caption{}\label{Fig:BO7}
	\end{subfigure}
	\begin{subfigure}{\linewidth}
		\centering
		\includegraphics[width=\linewidth]{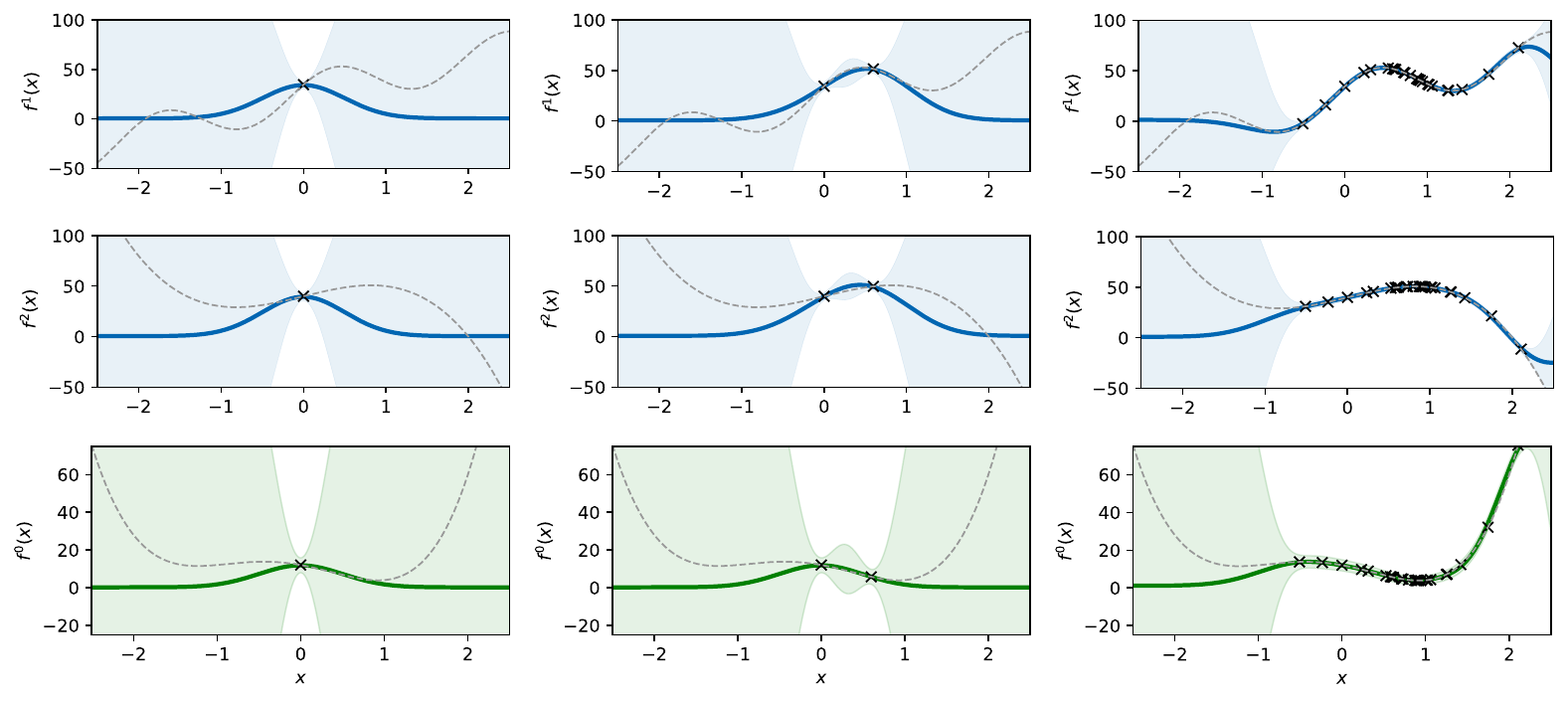}
		\caption{. }\label{Fig:BO8}
	\end{subfigure}
\caption{Cost and constraint GPs with the observed data points at iterations 1, 2, and 25 using  (a) method from [7] (b) method from [8]. The constraints are shown in blue, and the cost is shown in green. The shaded region indicates the confidence region.}
\end{figure*}

\begin{figure*}
\begin{subfigure}{\linewidth}
	\centering
	\includegraphics[width=\linewidth]{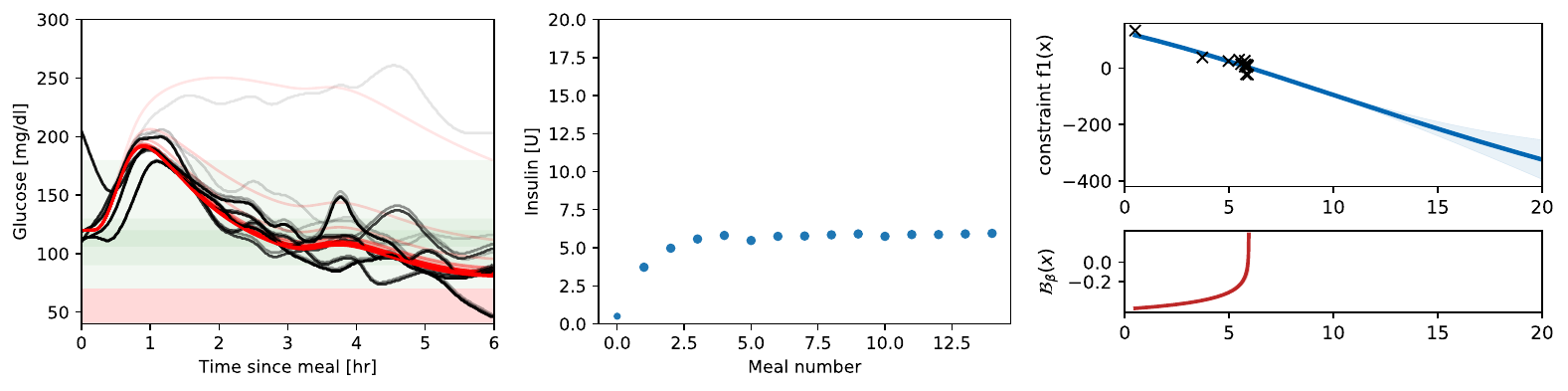}
	\caption{}\label{Fig:BC7}
\end{subfigure}
\begin{subfigure}{\linewidth}
	\centering
	\includegraphics[width=\linewidth]{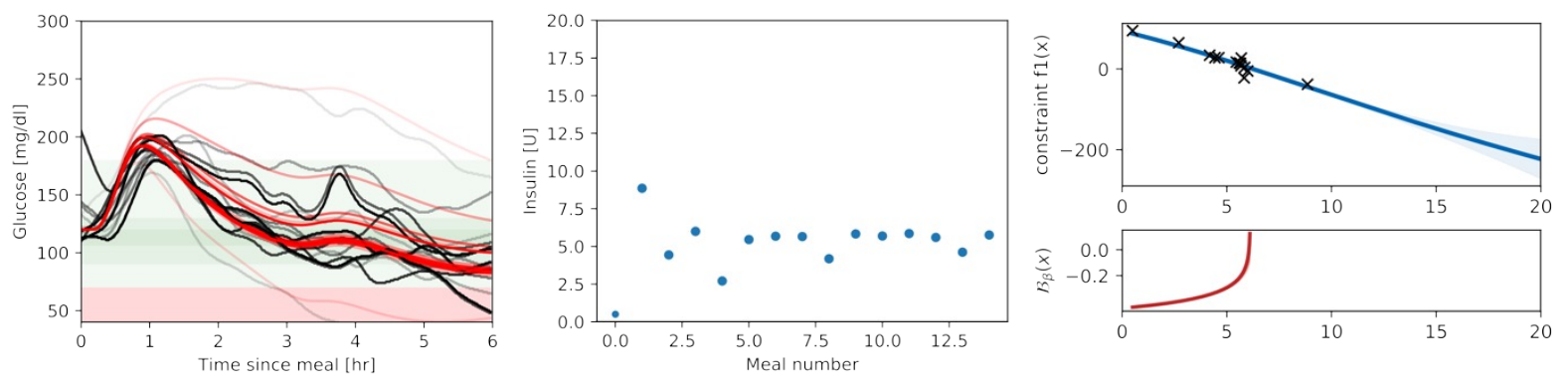}
	\caption{}\label{Fig:BC8}
\end{subfigure}
\caption{The post-meal blood glucose dynamics (left subplot), insulin dose for each meal (middle subplot), and the constraint GP after 15 meals using  (a) method from [7] (b) method from [8]. }
\end{figure*}

\subsection{Summary}
To summarize,  the probability of feasibility method from \cite{gelbart2014constrained}  converges to the constrained optimum, but violates the safety critical constraints during some explorations (cf. Fig.~2a), making it unsuitable for safety critical systems. 

Although \cite{pourmohamad2021bayesian} uses log barrier functions to find the constrained optimum, this also does not  satisfy the safety critical constraints in both the examples, although the constraint violations are smaller compared to [4]. Our method  on the other hand, satisfies the safety-critical constraints with high probability (cf.  Theorem~1). Furthermore, compared to [8], our method can be used with any acquisition function from the BO literature.

Both [7] and our proposed method guarantee safe learning with high probability, and provides very similar performance. As such, our method can be seen as an  alternative implementation to [7] that is based on the log barrier terms, instead of identifying safe sets and set of potential optimizers.

\end{document}


\maketitle

\vspace{-1.7cm}	

\subsubsection*{In silico experimental results on Hovorka virtual patients}
	We develop an algorithm to find the optimal bolus insulin dose using our proposed  safe Bayesian optimization framework. We tested this on the 10-adult cohort of the FDA-accepted UVA/Padova simulator, which is shown in the manuscript (cf. Section IV). 
	In addition to the UVA/Padova simulator, we also test our method on a different simulator based on the Hovorka T1D model [22]. 
	
		\noindent Comparing the performance of our algorithm  on a virtual patient simulator based on completely different physiological models clearly  demonstrates the robustness of our  algorithm towards patient variability, and  demonstrates the main advantage of a model-free dose guidance algorithm.
	
	\noindent  We create a cohort of 50 virtual patients based on the Hovorka T1D model, and test our algorithm with the identical tuning parameters for all the virtual patients. We use the same \textit{in silico} experimental protocol as for the UVA/Padova virtual patients. The simulator model and its parameters are the same as in [22, Ch. 2]. In this simulator, we use the plasma blood glucose as the measurement (i.e. we do not consider  sensor noise from the continuous glucose monitor).

		\noindent The left subplot of Fig. 1 shows the overall results from the cohort of 50 virtual patients  showing the median and the 5-95\% percentile of  the postprandial glucose measurements for each meal (darker colors indicate newer meals). The middle subplot shows the normalized insulin value as a percentage of the optimum bolus for the 15 meals.  The average \% time-in-range 70$<$CGM)$\le$180mg/dl (green), time-above-range CGM$>$180mg/dl (yellow), and time-below-range BG$\le70$mg/dl (red) for the cohort of 50 virtual patients is shown in the right subplot.
	  
	  	\noindent The postprandial glucose measurements, as well as the insulin dose suggested by our algorithm for each meal are shown seperately for each patient in Fig. 2 and Fig. 3 respectively. These clearly demonstrate that our algorithm is able to learn the patients optimum bolus insulin dose without violating the safety-critical hypoglycemia constraint. 
	  %
	
	\begin{figure}[h]
		\centering
		\includegraphics[width=0.32\linewidth]{figures/data_Hovorka/CGM_Population.pdf}
		\includegraphics[width=0.32\linewidth]{figures/data_Hovorka/Insulin_Population.pdf}
		\includegraphics[width=0.32\linewidth]{figures/data_Hovorka/TIR_Population.pdf}
	\caption{Overall results from the cohort of 50 virtual patients based on the Hovorka T1D simulator.}
	\end{figure}
	
\begin{figure*}
	\centering
	\begin{subfigure}{\linewidth}
		\centering
		\includegraphics[width=0.19\linewidth]{figures/data_Hovorka/BG_profile_Hovorka_1.pdf}
		\includegraphics[width=0.19\linewidth]{figures/data_Hovorka/BG_profile_Hovorka_2.pdf}
		\includegraphics[width=0.19\linewidth]{figures/data_Hovorka/BG_profile_Hovorka_3.pdf}
		\includegraphics[width=0.19\linewidth]{figures/data_Hovorka/BG_profile_Hovorka_4.pdf}
		\includegraphics[width=0.19\linewidth]{figures/data_Hovorka/BG_profile_Hovorka_5.pdf}
\end{subfigure}
	\begin{subfigure}{\linewidth}
	\centering
	\includegraphics[width=0.19\linewidth]{figures/data_Hovorka/BG_profile_Hovorka_6.pdf}
	\includegraphics[width=0.19\linewidth]{figures/data_Hovorka/BG_profile_Hovorka_7.pdf}
	\includegraphics[width=0.19\linewidth]{figures/data_Hovorka/BG_profile_Hovorka_8.pdf}
	\includegraphics[width=0.19\linewidth]{figures/data_Hovorka/BG_profile_Hovorka_9.pdf}
	\includegraphics[width=0.19\linewidth]{figures/data_Hovorka/BG_profile_Hovorka_10.pdf}
\end{subfigure}
	\begin{subfigure}{\linewidth}
	\centering
	\includegraphics[width=0.19\linewidth]{figures/data_Hovorka/BG_profile_Hovorka_11.pdf}
	\includegraphics[width=0.19\linewidth]{figures/data_Hovorka/BG_profile_Hovorka_12.pdf}
	\includegraphics[width=0.19\linewidth]{figures/data_Hovorka/BG_profile_Hovorka_13.pdf}
	\includegraphics[width=0.19\linewidth]{figures/data_Hovorka/BG_profile_Hovorka_14.pdf}
	\includegraphics[width=0.19\linewidth]{figures/data_Hovorka/BG_profile_Hovorka_15.pdf}
\end{subfigure}
\begin{subfigure}{\linewidth}
	\centering
	\includegraphics[width=0.19\linewidth]{figures/data_Hovorka/BG_profile_Hovorka_16.pdf}
	\includegraphics[width=0.19\linewidth]{figures/data_Hovorka/BG_profile_Hovorka_17.pdf}
	\includegraphics[width=0.19\linewidth]{figures/data_Hovorka/BG_profile_Hovorka_18.pdf}
	\includegraphics[width=0.19\linewidth]{figures/data_Hovorka/BG_profile_Hovorka_19.pdf}
	\includegraphics[width=0.19\linewidth]{figures/data_Hovorka/BG_profile_Hovorka_20.pdf}
\end{subfigure}
	\begin{subfigure}{\linewidth}
	\centering
	\includegraphics[width=0.19\linewidth]{figures/data_Hovorka/BG_profile_Hovorka_21.pdf}
	\includegraphics[width=0.19\linewidth]{figures/data_Hovorka/BG_profile_Hovorka_22.pdf}
	\includegraphics[width=0.19\linewidth]{figures/data_Hovorka/BG_profile_Hovorka_23.pdf}
	\includegraphics[width=0.19\linewidth]{figures/data_Hovorka/BG_profile_Hovorka_24.pdf}
	\includegraphics[width=0.19\linewidth]{figures/data_Hovorka/BG_profile_Hovorka_25.pdf}
\end{subfigure}
\begin{subfigure}{\linewidth}
	\centering
	\includegraphics[width=0.19\linewidth]{figures/data_Hovorka/BG_profile_Hovorka_26.pdf}
	\includegraphics[width=0.19\linewidth]{figures/data_Hovorka/BG_profile_Hovorka_27.pdf}
	\includegraphics[width=0.19\linewidth]{figures/data_Hovorka/BG_profile_Hovorka_28.pdf}
	\includegraphics[width=0.19\linewidth]{figures/data_Hovorka/BG_profile_Hovorka_29.pdf}
	\includegraphics[width=0.19\linewidth]{figures/data_Hovorka/BG_profile_Hovorka_30.pdf}
\end{subfigure}
	\begin{subfigure}{\linewidth}
	\centering
	\includegraphics[width=0.19\linewidth]{figures/data_Hovorka/BG_profile_Hovorka_31.pdf}
	\includegraphics[width=0.19\linewidth]{figures/data_Hovorka/BG_profile_Hovorka_32.pdf}
	\includegraphics[width=0.19\linewidth]{figures/data_Hovorka/BG_profile_Hovorka_33.pdf}
	\includegraphics[width=0.19\linewidth]{figures/data_Hovorka/BG_profile_Hovorka_34.pdf}
	\includegraphics[width=0.19\linewidth]{figures/data_Hovorka/BG_profile_Hovorka_35.pdf}
\end{subfigure}
\begin{subfigure}{\linewidth}
	\centering
	\includegraphics[width=0.19\linewidth]{figures/data_Hovorka/BG_profile_Hovorka_36.pdf}
	\includegraphics[width=0.19\linewidth]{figures/data_Hovorka/BG_profile_Hovorka_37.pdf}
	\includegraphics[width=0.19\linewidth]{figures/data_Hovorka/BG_profile_Hovorka_38.pdf}
	\includegraphics[width=0.19\linewidth]{figures/data_Hovorka/BG_profile_Hovorka_39.pdf}
	\includegraphics[width=0.19\linewidth]{figures/data_Hovorka/BG_profile_Hovorka_40.pdf}
\end{subfigure}
	\begin{subfigure}{\linewidth}
	\centering
	\includegraphics[width=0.19\linewidth]{figures/data_Hovorka/BG_profile_Hovorka_41.pdf}
	\includegraphics[width=0.19\linewidth]{figures/data_Hovorka/BG_profile_Hovorka_42.pdf}
	\includegraphics[width=0.19\linewidth]{figures/data_Hovorka/BG_profile_Hovorka_43.pdf}
	\includegraphics[width=0.19\linewidth]{figures/data_Hovorka/BG_profile_Hovorka_44.pdf}
	\includegraphics[width=0.19\linewidth]{figures/data_Hovorka/BG_profile_Hovorka_45.pdf}
\end{subfigure}
\begin{subfigure}{\linewidth}
	\centering
	\includegraphics[width=0.19\linewidth]{figures/data_Hovorka/BG_profile_Hovorka_46.pdf}
	\includegraphics[width=0.19\linewidth]{figures/data_Hovorka/BG_profile_Hovorka_47.pdf}
	\includegraphics[width=0.19\linewidth]{figures/data_Hovorka/BG_profile_Hovorka_48.pdf}
	\includegraphics[width=0.19\linewidth]{figures/data_Hovorka/BG_profile_Hovorka_49.pdf}
	\includegraphics[width=0.19\linewidth]{figures/data_Hovorka/BG_profile_Hovorka_50.pdf}
\end{subfigure}
\caption{Postprandial glucose dynamics for the cohort of 50 Hovorka virtual patients. Brighter shade indicates newer meals.}
\end{figure*}

\begin{figure*}
	\centering
	\begin{subfigure}{\linewidth}
		\centering
		\includegraphics[width=0.19\linewidth]{figures/data_Hovorka/Insulin_1.pdf}
		\includegraphics[width=0.19\linewidth]{figures/data_Hovorka/Insulin_2.pdf}
		\includegraphics[width=0.19\linewidth]{figures/data_Hovorka/Insulin_3.pdf}
		\includegraphics[width=0.19\linewidth]{figures/data_Hovorka/Insulin_4.pdf}
		\includegraphics[width=0.19\linewidth]{figures/data_Hovorka/Insulin_5.pdf}
	\end{subfigure}
	\begin{subfigure}{\linewidth}
		\centering
		\includegraphics[width=0.19\linewidth]{figures/data_Hovorka/Insulin_6.pdf}
		\includegraphics[width=0.19\linewidth]{figures/data_Hovorka/Insulin_7.pdf}
		\includegraphics[width=0.19\linewidth]{figures/data_Hovorka/Insulin_8.pdf}
		\includegraphics[width=0.19\linewidth]{figures/data_Hovorka/Insulin_9.pdf}
		\includegraphics[width=0.19\linewidth]{figures/data_Hovorka/Insulin_10.pdf}
	\end{subfigure}
	\begin{subfigure}{\linewidth}
		\centering
		\includegraphics[width=0.19\linewidth]{figures/data_Hovorka/Insulin_11.pdf}
		\includegraphics[width=0.19\linewidth]{figures/data_Hovorka/Insulin_12.pdf}
		\includegraphics[width=0.19\linewidth]{figures/data_Hovorka/Insulin_13.pdf}
		\includegraphics[width=0.19\linewidth]{figures/data_Hovorka/Insulin_14.pdf}
		\includegraphics[width=0.19\linewidth]{figures/data_Hovorka/Insulin_15.pdf}
	\end{subfigure}
	\begin{subfigure}{\linewidth}
		\centering
		\includegraphics[width=0.19\linewidth]{figures/data_Hovorka/Insulin_16.pdf}
		\includegraphics[width=0.19\linewidth]{figures/data_Hovorka/Insulin_17.pdf}
		\includegraphics[width=0.19\linewidth]{figures/data_Hovorka/Insulin_18.pdf}
		\includegraphics[width=0.19\linewidth]{figures/data_Hovorka/Insulin_19.pdf}
		\includegraphics[width=0.19\linewidth]{figures/data_Hovorka/Insulin_20.pdf}
	\end{subfigure}
	\begin{subfigure}{\linewidth}
		\centering
		\includegraphics[width=0.19\linewidth]{figures/data_Hovorka/Insulin_21.pdf}
		\includegraphics[width=0.19\linewidth]{figures/data_Hovorka/Insulin_22.pdf}
		\includegraphics[width=0.19\linewidth]{figures/data_Hovorka/Insulin_23.pdf}
		\includegraphics[width=0.19\linewidth]{figures/data_Hovorka/Insulin_24.pdf}
		\includegraphics[width=0.19\linewidth]{figures/data_Hovorka/Insulin_25.pdf}
	\end{subfigure}
	\begin{subfigure}{\linewidth}
		\centering
		\includegraphics[width=0.19\linewidth]{figures/data_Hovorka/Insulin_26.pdf}
		\includegraphics[width=0.19\linewidth]{figures/data_Hovorka/Insulin_27.pdf}
		\includegraphics[width=0.19\linewidth]{figures/data_Hovorka/Insulin_28.pdf}
		\includegraphics[width=0.19\linewidth]{figures/data_Hovorka/Insulin_29.pdf}
		\includegraphics[width=0.19\linewidth]{figures/data_Hovorka/Insulin_30.pdf}
	\end{subfigure}
	\begin{subfigure}{\linewidth}
		\centering
		\includegraphics[width=0.19\linewidth]{figures/data_Hovorka/Insulin_31.pdf}
		\includegraphics[width=0.19\linewidth]{figures/data_Hovorka/Insulin_32.pdf}
		\includegraphics[width=0.19\linewidth]{figures/data_Hovorka/Insulin_33.pdf}
		\includegraphics[width=0.19\linewidth]{figures/data_Hovorka/Insulin_34.pdf}
		\includegraphics[width=0.19\linewidth]{figures/data_Hovorka/Insulin_35.pdf}
	\end{subfigure}
	\begin{subfigure}{\linewidth}
		\centering
		\includegraphics[width=0.19\linewidth]{figures/data_Hovorka/Insulin_36.pdf}
		\includegraphics[width=0.19\linewidth]{figures/data_Hovorka/Insulin_37.pdf}
		\includegraphics[width=0.19\linewidth]{figures/data_Hovorka/Insulin_38.pdf}
		\includegraphics[width=0.19\linewidth]{figures/data_Hovorka/Insulin_39.pdf}
		\includegraphics[width=0.19\linewidth]{figures/data_Hovorka/Insulin_40.pdf}
	\end{subfigure}
	\begin{subfigure}{\linewidth}
		\centering
		\includegraphics[width=0.19\linewidth]{figures/data_Hovorka/Insulin_41.pdf}
		\includegraphics[width=0.19\linewidth]{figures/data_Hovorka/Insulin_42.pdf}
		\includegraphics[width=0.19\linewidth]{figures/data_Hovorka/Insulin_43.pdf}
		\includegraphics[width=0.19\linewidth]{figures/data_Hovorka/Insulin_44.pdf}
		\includegraphics[width=0.19\linewidth]{figures/data_Hovorka/Insulin_45.pdf}
	\end{subfigure}
	\begin{subfigure}{\linewidth}
		\centering
		\includegraphics[width=0.19\linewidth]{figures/data_Hovorka/Insulin_46.pdf}
		\includegraphics[width=0.19\linewidth]{figures/data_Hovorka/Insulin_47.pdf}
		\includegraphics[width=0.19\linewidth]{figures/data_Hovorka/Insulin_48.pdf}
		\includegraphics[width=0.19\linewidth]{figures/data_Hovorka/Insulin_49.pdf}
		\includegraphics[width=0.19\linewidth]{figures/data_Hovorka/Insulin_50.pdf}
	\end{subfigure}
	\caption{Bolus insulin over 5 days (i.e., 15 meals) for the cohort of 50 Hovorka virtual patients.}
\end{figure*}

\subsubsection*{Additional plots for  results from the UVA/Padova simulator}
	\noindent For the sake of brevity, we only showed the overall results from the 10-adult cohort of the FDA-accepted UVA/Padova T1D simulator in the manuscript (cf. Fig.~3 in the manuscript). Here, we show the detailed plots of the postprandial glucose dynamics (Fig.~4) and the insulin dose (Fig.~5) suggested by our algorithm for each individual virtual patient from the 10-adult cohort of the UVA/Padova simulator.

\begin{figure*}[h]
	\centering
	\begin{subfigure}{\linewidth}
		\centering
		\includegraphics[width=0.19\linewidth]{figures/data_UVAPadova/Context_CGM_BG_profiles_1.pdf}
		\includegraphics[width=0.19\linewidth]{figures/data_UVAPadova/Context_CGM_BG_profiles_2.pdf}
		\includegraphics[width=0.19\linewidth]{figures/data_UVAPadova/Context_CGM_BG_profiles_3.pdf}
		\includegraphics[width=0.19\linewidth]{figures/data_UVAPadova/Context_CGM_BG_profiles_4.pdf}
		\includegraphics[width=0.19\linewidth]{figures/data_UVAPadova/Context_CGM_BG_profiles_5.pdf}
	\end{subfigure}
	\begin{subfigure}{\linewidth}
		\centering
		\includegraphics[width=0.19\linewidth]{figures/data_UVAPadova/Context_CGM_BG_profiles_6.pdf}
		\includegraphics[width=0.19\linewidth]{figures/data_UVAPadova/Context_CGM_BG_profiles_7.pdf}
		\includegraphics[width=0.19\linewidth]{figures/data_UVAPadova/Context_CGM_BG_profiles_8.pdf}
		\includegraphics[width=0.19\linewidth]{figures/data_UVAPadova/Context_CGM_BG_profiles_9.pdf}
		\includegraphics[width=0.19\linewidth]{figures/data_UVAPadova/Context_CGM_BG_profiles_10.pdf}
	\end{subfigure}
\caption{Postprandial glucose dynamics for the cohort of 10 UVA/Padova virtual patients. Brighter shade indicates newer meals.}
\end{figure*}	
\begin{figure*}[h]
	\centering
	\begin{subfigure}{\linewidth}
		\centering
		\includegraphics[width=0.19\linewidth]{figures/data_UVAPadova/Insulin_1.pdf}
		\includegraphics[width=0.19\linewidth]{figures/data_UVAPadova/Insulin_2.pdf}
		\includegraphics[width=0.19\linewidth]{figures/data_UVAPadova/Insulin_3.pdf}
		\includegraphics[width=0.19\linewidth]{figures/data_UVAPadova/Insulin_4.pdf}
		\includegraphics[width=0.19\linewidth]{figures/data_UVAPadova/Insulin_5.pdf}
	\end{subfigure}
	\begin{subfigure}{\linewidth}
		\centering
		\includegraphics[width=0.19\linewidth]{figures/data_UVAPadova/Insulin_6.pdf}
		\includegraphics[width=0.19\linewidth]{figures/data_UVAPadova/Insulin_7.pdf}
		\includegraphics[width=0.19\linewidth]{figures/data_UVAPadova/Insulin_8.pdf}
		\includegraphics[width=0.19\linewidth]{figures/data_UVAPadova/Insulin_9.pdf}
		\includegraphics[width=0.19\linewidth]{figures/data_UVAPadova/Insulin_10.pdf}
	\end{subfigure}
\caption{Bolus insulin over 5 days (i.e., 15 meals) for the cohort of 10 UVA/Padova virtual patients.}
\end{figure*}